\documentclass[12pt]{amsart}

\usepackage{amsmath}
\usepackage{amsfonts}
\usepackage{amssymb}
\usepackage{amsthm}
\usepackage{color}
\usepackage[utf8]{inputenc}
\usepackage{graphicx}
\usepackage{ytableau}
\usepackage{hyperref}
\usepackage[capitalize,nameinlink,noabbrev,nosort]{cleveref}

\newtheorem{theorem}{Theorem}[section]
\newtheorem{definition}[theorem]{Definition}

\newtheorem{corollary}[theorem]{Corollary}
\newtheorem{proposition}[theorem]{Proposition}
\newtheorem{remark}[theorem]{Remark}
\newtheorem{example}[theorem]{Example}
\newtheorem{question}[theorem]{Question}

\DeclareMathOperator{\core}{core}

\DeclareMathOperator{\Par}{\mathcal{P}}
\DeclareMathOperator{\D}{\mathcal{D}}
\newcommand{\vv}{v}

\newcommand{\numcore}{P}
\newcommand{\ds}{\displaystyle}
\newcommand{\qbinom}[2]{\bgroup\renewcommand*{\arraystretch}{1}\begin{bmatrix} #1 \\ #2\end{bmatrix} \egroup}
\DeclareMathOperator{\Abc}{\mathcal{A}}

\begin{document}
	
	\title
	{Cores of partitions in rectangles}
	
	\author{Arvind Ayyer}
	\address{Arvind Ayyer, Department of Mathematics, Indian Institute of Science, Bangalore - 560012, India}
	\email{arvind@iisc.ac.in}
	
	\author{Shubham Sinha}
	\address{Shubham Sinha, Department of Mathematics, University of California San Diego, 9500 Gilman Drive, La Jolla, CA 92093, USA}
	\email{shs074@ucsd.edu}
	
	\date{\today}

	\begin{abstract}
				For a positive integer $t \geq 2$, the $t$-core of a partition plays an important role in modular representation theory and combinatorics. We initiate the study of $t$-cores of partitions contained in an {$r\times s$} rectangle. Our main results are as follows. We first give a simple formula for the number of partitions in the rectangle {that} are themselves $t$-cores and compute its asymptotics for large $r,s$. We then prove that the number of partitions inside the rectangle whose $t$-cores are a fixed partition $\rho$ is given by a product of binomial coefficients. Finally, we use this formula to compute the distribution of the $t$-core of a uniformly random partition inside the rectangle extending our previous work on all partitions of a fixed integer $n$ (\emph{Ann. Appl. Prob.} 2023). In particular, we show that in the limit as $r,s \to \infty$ maintaining a fixed aspect ratio, we again obtain a Gamma distribution with the same shape parameter $\alpha = (t-1)/2$ and rate parameter $\beta$ that depends on the aspect ratio.
	\end{abstract}
	
		\keywords{partitions, $t$-cores, multivariate hypergeometric distribution, Gamma distribution}
	\subjclass[2010]{05A15, 05A16, 05A17, 60C05}
	
	\maketitle

\section{Introduction}

The combinatorial study of $t$-cores has its origins in the theory of modular representations (see for instance~\cite{james-kerber-2009}), and has applications in many areas of mathematics. We focus only on combinatorial and probabilistic aspects of $t$-cores in this work. Garvan, Kim and Stanton~\cite{garvan-kim-stanton-1990} stated many combinatorial properties of $t$-cores in their study of cranks. Lulov and Pittel~\cite{lulov-pittel-1999} and independently, we~\cite{ayyer-sinha-2023} studied the sizes of $t$-cores of uniformly random partitions of an integer $n$, and derived the limiting distribution; see \cite{ayyer-sinha-2020} for the extended abstract. Recently, Rostam~\cite{rostam-2021} used  determinantal properties of Plancherel-random partitions to derive a similar result for the sizes of $t$-cores.

We extend our previous work by focusing on the set of partitions contained in an $r \times s$ rectangle, which we denote $\Par_{r,s}$, specifically with respect to $t$-cores and quotients. We will focus on 
enumerative and probabilistic aspects of such partitions. 
We begin with the preliminaries in \cref{sec:prelim}.
In \cref{sec:numcores}, we give an explicit formula for the number of $t$-cores in $\Par_{r,s}$  in \cref{prop:numcores}. We then look at the growth of this number as $s,r$ tend to infinity so that their ratios approach a fixed number. 
In that case, we obtain the leading constant in \cref{thm:asymp-numcores}. 
In \cref{sec:num-fixedcore}, we enumerate partitions in $\Par_{r,s}$ whose $t$-cores are some fixed $t$-core $\rho$
in \cref{thm:partition_fixed_core} and prove a concise formula for the expected size of the $t$-core of a uniformly random partition in $\Par_{r,s}$ in \cref{prop:exp}.
Finally, in \cref{sec:limsize}, we consider the size of the $t$-core of a uniformly random partition in $\Par_{r,s}$ under two limits. We first take $s \to \infty$ with fixed $r$ and give an explicit formula for the probability generating function of the size of the $t$-core.  Then we take the limit again as $r,s$ tend to infinity with $s/r \to \kappa$. We show that this converges in distribution to a Gamma random variable in \cref{thm:gamma}

\section{Preliminaries}
\label{sec:prelim}

Let $r,s \geq 1$ and $t \geq 2$ be fixed integers. A \emph{partition} $\lambda$ is a weakly decreasing sequence of positive integers $\lambda = (\lambda_1,\dots,\lambda_k)$. The \emph{length} of a partition $\lambda$, denoted $\ell(\lambda)$ is defined to be the number of nonzero parts in $\lambda$ and the \emph{size} of $\lambda$ is $|\lambda| = \sum_i \lambda_i$.
Graphically, a partition $\lambda$ can be represented by a \emph{Young diagram}, in which the $i$'th row contains $\lambda_i$ left-justified boxes. We will number the rows from top to bottom, following the English notation. For example, the Young diagram of the partition $\lambda = (5,4,4,1)$, with $|\lambda| = 14$ and $\ell(\lambda) = 4$, is
\begin{equation}
\label{eg-partition}
\ydiagram{5, 4, 4, 1}
\end{equation}
The conjugate of a partition $\lambda$, denoted $\lambda'$, is the partition whose Young diagram is the conjugate of that of $\lambda$. For the example above, it follows from \eqref{eg-partition} that $(5,4,4,1)' = (4, 3, 3, 3, 1)$.
Let $\Par$ be the set of all partitions and $\Par_{r,s}$ be the set of partitions $\lambda$ such that $\lambda_1 \leq s$ and $\ell(\lambda) \leq r$. 
Then $\Par_{r,s}$ is the set of partitions whose Young diagrams fit inside the rectangular partition $(s^r)$. The partition in \eqref{eg-partition} fits in $\Par_{r,s}$ for all $s \geq 5$ and all $r \geq 4$.
It is easy to see that the number of partitions contained in $\Par_{r,s}$ is the binomial coefficient $\binom{r+s}{r}$ and it is also well-known that
\begin{equation}
\label{qbinom}
\sum_{\lambda \in \Par_{r,s}} q^{|\lambda|} = \qbinom{r+s}r_q,
\end{equation}
the $q$-binomial coefficient or Gaussian polynomial~\cite[Proposition 1.7.3]{ec1}.

If the Young diagram of a partition $\mu$ is contained inside that of $\lambda$, the set difference, denoted {$\lambda/\mu$} is called a \emph{skew shape}.
A \emph{rim-hook} is a connected skew shape containing no $2 \times 2$ blocks. {A $t$-rim hook is a rim hook of size $t$.}
The \emph{$t$-core} of a partition $\lambda$, denoted $\core_t(\lambda)$, is obtained by successively removing $t$-rim hooks from the Young diagram of $\lambda$ while maintaining a Young diagram at each stage. Although not obvious, it can be shown that the $t$-core is well-defined, i.e. it does not depend on the order in which the rim-hooks are removed. 
If $\lambda$ has no $t$-rim hooks, $\lambda$ is itself said to be a $t$-core. The partition in \eqref{eg-partition} is a $7$-core. {We say that a partition $\lambda$ is defined to be a $t$-divisible partition if its $t$-core, $\core_t(\lambda)$, is the empty partition. These partitions are in bijection with the set of $t$-quotients; see \cite{ayyer-sinha-2023} for details.} 
For the partition $\lambda$ in \eqref{eg-partition}, $\core_5(\lambda) = (3,1)$, $\lambda$ is a $7$-core and $\lambda$ is $2$-divisible.
See~\cite{james-kerber-2009,loehr-2011,ayyer-sinha-2023} for more properties of $t$-cores.

{There are several equivalent notions that can be used to study $t$-cores of partitions, such as \emph{Maya diagrams}, \emph{star diagrams}, \emph{$\beta$-sets} and \emph{abaci}.
We find the language of abaci most convenient for proving our main results. An \emph{abacus} is an infinite word $w=(w_m)_{m\in\mathbb{Z}}$ in the {alphabet} $\{0,1\}$ such that there exists a positive integer $N$ such that $w_i=0 $ for all $i\ge N$ and $w_i=1$ for all $i \leq -N$. } The \emph{offset} of $w$ (defined in { \cite[Section 2]{olsson} as position}) is given by
\[
d(w)= |\{i \geq 0 \mid w_i = 1\}|-|\{ i < 0 \mid w_i = 0\}|. 
\]
An abacus with zero offset is said to be \emph{balanced}.
Every abacus corresponds to a partition in a natural way. Replacing every $1$ by a $(0,1)$ (up step) and $0$ by a $(1,0)$ (right step) traces out the boundary of a partition.
The partition $\lambda$ in \eqref{eg-partition} corresponds to the abacus
\[
(\dots, 1, 1, 1, 0, 1, 0, 0, 0, 1, 1, 0, 1, 0, 0, 0, \dots).
\]
Let $\Abc$ denote the set of abaci. 
 {There is a well-known bijection~\cite{olsson} (see also \cite[Theorem 11.3]{loehr-2011})} $\psi:\Abc\to\mathbb{Z}\times \Par$, sending $w$ to $(d(w),\lambda)$. In particular, this provides a bijection between $\Par $ and the set of abaci with zero offset given by $\lambda \mapsto \psi^{-1}(0,\lambda)$. 

An abacus $w$ is said to be \emph{justified} {if $\psi(w)=(p, \emptyset)$ for some $p$, which is called the \emph{position of justification} of $w$. Equivalently, $w$ is justified if all $1$s of $w$ occur before all $0$s of $w$ and the position of justification of a justified $w$ is the position of its first $0$.} 
Let $w=\psi^{-1}(0,\lambda)$ for a partition $\lambda$.
{The \emph{$t$-runner abacus} of $w$ is the $t$-tuple $(w^0, \dots, w^{t-1})$, where $w^i =(w_{mt+i})_{m\in \mathbb{Z}}$.}
It can be shown that $\lambda$ is a $t$-core if and only if the abaci $w^i$ are justified for $0\le i\le t-1$ (see {\cite[Chapter 2]{james-kerber-2009}}). {If $w$ is balanced, then the positions of justification $p_i$ of $w^i$ sum to zero.} Moreover, $t$-cores are uniquely determined by {the} sequence of integers $(p_0,\dots,p_{t-1})$ that sum to zero {specifying the positions of justification of the $t$-runner abacus}{~\cite[Bijection 2]{garvan-kim-stanton-1990}}. 
The positions of justification are important because they determine the size of the $t$-core, which is given by 
\begin{equation}
\label{core-size}
|\core_t(\lambda)|=\sum_{i=0}^{t-1}\bigg(\frac{t}{2}p_i^2+ip_i\bigg).
\end{equation}
The \emph{$t$-quotient} of $\lambda$ is the $t$-tuple of partitions $\bar{\mu} = (\mu^0,\mu^1,\dots,\mu^{t-1})$ where $\psi(w^i) \allowbreak =(p_i,\mu^i)$. Furthermore, $\core_t(\lambda)$ is the $t$-core whose positions of justification are $(p_0, \allowbreak \dots,p_{t-1})$.
A fundamental result in this area is the \emph{Littlewood decomposition} or \emph{partition division theorem}~\cite[Theorem 11.22]{loehr-2011}, 
which states that there is a natural bijection between $\Par$ and the set of pairs $(\rho, \bar{\mu})$, where $\rho$ is a $t$-core and $\bar{\mu}$ is the $t$-quotient, such that $|\lambda|=|\rho|+t(|\mu^0|+ \cdots + |\mu^{t-1}|)$.

\section{Number of $t$-cores in a rectangle} 
\label{sec:numcores}

We shall first describe a new version of the partition division theorem for the set $\Par_{r,s}$.

\begin{definition}
	Let $\vv$ be a finite word in the alphabet $\{0,1\}$, i.e. a binary word. We denote $\ell(\vv)$ for the length of the word $\vv$. 
	The \emph{size}, denoted $|\vv| = \sum_i \vv_i$, is the number of $1$'s appearing in $\vv$. A word $\vv$ is said to be \emph{justified} if all the 
	$1$'s in it appear before all the $0$'s.
\end{definition}

Let $\vv$ be a binary word of length $r+s$ whose positions are labelled $0$ through $r+s-1$. For $0 \leq i \leq t-1$, denote 
\begin{equation}
\label{defni}
n_i:=\left\lfloor \frac{r+s+t-1-i}{t} \right\rfloor
\end{equation}
for the number of positions congruent to $i$ modulo $t$.

\begin{proposition}
	\label{prop:bij_tuple_words}
	Fix $t\ge 2$. There is a bijection between the set $\Par_{r,s}$ and a $t$-tuple of finite binary words $(\vv^0,\dots,\vv^{t-1})$ satisfying the following two conditions:
	\begin{itemize}
		\item $\ell(\vv^i)=n_i$, and 
		\item $|\vv^0|+|\vv^1|+\cdots +|\vv^{t-1}|=r$.
	\end{itemize}
	Moreover, a partition $\lambda\in \Par_{r,s}$ is a $t$-core if and  only if each word in $(\vv^0,\dots,\vv^{t-1})$ is justified. 
\end{proposition}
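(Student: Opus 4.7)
The plan is to realize a partition $\lambda \in \Par_{r,s}$ as a finite binary word, and then split that word into $t$ subwords indexed by residue modulo $t$. Concretely, I would first identify $\lambda \in \Par_{r,s}$ with its boundary lattice path inside the rectangle $(s^r)$: reading the $r$ up-steps as $1$'s and the $s$ right-steps as $0$'s yields a binary word $\vv$ of length $r+s$ with $|\vv|=r$. This is the classical encoding underlying the well-known formula \eqref{qbinom}.

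Next, I would connect $\vv$ to the infinite abacus $w=\psi^{-1}(0,\lambda)$ introduced in \cref{sec:prelim}. Since $\ell(\lambda) \leq r$ forces $w_k=1$ for all $k \leq -r-1$ and $\lambda_1 \leq s$ forces $w_k=0$ for all $k \geq s$, the abacus $w$ is completely determined outside the window of positions $\{-r,-r+1,\dots,s-1\}$, and the restriction of $w$ to this window agrees with $\vv$ after the reindexing $k \mapsto k+r \in \{0,1,\dots,r+s-1\}$. The zero-offset condition of $w$ forces exactly $r$ ones inside the window, which is consistent with $|\vv|=r$. I would then define $\vv^i$ as the subword of $\vv$ at positions congruent to $i$ modulo $t$; an elementary count of residues in $\{0,1,\dots,r+s-1\}$ gives $\ell(\vv^i)=n_i$ as in \eqref{defni}, and the identity $|\vv^0|+\cdots+|\vv^{t-1}|=|\vv|=r$ is immediate.

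The inverse map is transparent: given any $t$-tuple $(\vv^0,\dots,\vv^{t-1})$ satisfying the two listed conditions, interleave the entries in the unique way compatible with the residue decomposition to produce a binary word of length $r+s$ with exactly $r$ ones, and read that word as a boundary path to recover a partition in $\Par_{r,s}$. Both directions are manifestly inverse to each other, establishing the claimed bijection.

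Finally, for the $t$-core characterization, I would observe that each $\vv^i$ is a finite window of a single runner $w^j$ of the $t$-runner abacus, with index shift $j=(i-r)\bmod t$ accounting for the change from the original indexing to the window indexing. Because $w^j$ is already all $1$'s to the left of this window and all $0$'s to its right, the infinite runner $w^j$ is justified exactly when the finite subword $\vv^i$ is justified. Since $\lambda$ is a $t$-core if and only if every $w^j$ is justified (by the characterization recalled in \cref{sec:prelim}), the final clause of the proposition follows. The one bookkeeping hazard I anticipate is tracking the fixed cyclic permutation $i \mapsto (i-r)\bmod t$ of runner labels; this is harmless for the statement but must be handled carefully if one later wants to read off the positions of justification $(p_0,\dots,p_{t-1})$ from the tuple $(\vv^0,\dots,\vv^{t-1})$.
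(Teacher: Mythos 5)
Your proof is correct and follows essentially the same route as the paper: restrict the balanced abacus $\psi^{-1}(0,\lambda)$ to the window of positions $-r$ through $s-1$ (which determines the whole abacus since $\lambda\in\Par_{r,s}$), split by residue modulo $t$, and note that each $\vv^i$ is a window of the runner $w^j$ with $j\equiv i-r\pmod t$ so that justification of the finite word is equivalent to justification of the infinite runner. Your extra care about why the abacus is forced outside the window, and about the cyclic shift of runner labels, matches what the paper records in its \cref{rem:posjus}.
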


\begin{proof}
	For a partition $\lambda\in \Par_{r,s}$, we consider the segment of the abacus $w=\psi^{-1}(0,\lambda)$ between $-r$ and $s-1$. Let the corresponding word be  $\vv=(w_{-r},\dots,w_{-1},w_0,w_{1},\dots w_{s-1})$.
	Note that $|\vv|=r$ and $\ell(\vv)=r+s$. 
	We may split the above word modulo $t$ to define, for $0\le i\le t-1$,
	\begin{equation}
	\label{def-v^i}
	\vv^i= (w_{-r+i}, w_{-r+i+t}, \dots, w_{-r+i+t(n_i-1)}),
	\end{equation}
	where $n_i$ is given by \eqref{defni}. 
	From the description above, it is clear that $\ell(\vv^i)=n_i$ and $|\vv^0| + \cdots +|\vv^{t-1}|=|\vv|=r$.
	
	Recall that a partition $\lambda$ is a $t$-core if and only if the abaci $w^j=(w_{mt+j})_{m\in \mathbb{Z}}$ are justified. Moreover, each word $\vv^i$ is a segment of the abacus $w^j$ where $j\equiv i-r\pmod t$.
	Since $\lambda \in \Par_{r,s}$, $\vv^i$ is justified if and only if $w^j$ is justified.
\end{proof}

{
	\begin{example}
		Consider the case of $r = s= 2$. Then
		\[
		\Par_{2,2} = \{ \emptyset,\, (1),\, (2),\, (1,1),\, (2,1),\, (2,2) \}.
		\]
		For $t = 3$, we have $n_0 = 2, n_1 = 1, n_2 = 1$. Then the set of $3$-tuples of binary words in natural bijection are
		\[
		\{(10, 1, 0),\, (10, 0, 1),\, (11, 0, 0),\, (00, 1, 1),\, (01, 1, 0),\, (01, 0, 1)\}.
		\]
		Notice that the last two are the only partitions here {that} are not $3$-cores. This corresponds to the fact that the first word in the tuple in both cases is $01$, which is not justified.
	\end{example}
}
{Let $\Par^t_{r,s}$ denote the set of $t$-cores inside the $r\times s $ box.}
\begin{remark}
	\label{rem:posjus}
	{
		Recall the positions of justification of a partition from \cref{sec:prelim}.
		For $\rho\in \Par_{r,s}^t$, the positions of justification $(p_0,\dots,p_{t-1})$ can be computed using {the proof of} \cref{prop:bij_tuple_words} as
		\begin{equation}
		\label{posofjustification}
		p_j= |\vv_{i}|-\bigg\lfloor \frac{r+t-1-i}{t} \bigg \rfloor,  \quad
		j\equiv i-r \pmod t.
		\end{equation}
	}
\end{remark}

A \emph{weak composition} of $r$ into $t$ parts is an ordered tuple of non-negative integers $(a_0,\dots, a_{t-1})$ {that} sum up to $r$. 
{If $p(x)$ is a polynomial, the notation $[x^n]p(x)$ stands for the coefficient of $x^n$ in $p(x)$.}

\begin{proposition}
	\label{prop:numcores}
	{Let $\numcore^t_{r,s}$ denote the number of $t$-cores inside the $r\times s $ box. Then} 
	\begin{equation}
	\label{numcore}
	\numcore^t_{r,s} = 	[q^r]\prod_{i=0}^{t-1} (1+q+ \cdots +q^{n_i}),
	\end{equation}
	where $n_i$ is defined in \eqref{defni}. 
\end{proposition}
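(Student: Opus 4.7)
The plan is to apply the bijection from \cref{prop:bij_tuple_words} directly. By that proposition, a partition $\lambda\in\Par_{r,s}$ is a $t$-core if and only if it corresponds to a $t$-tuple $(\vv^0,\dots,\vv^{t-1})$ of justified binary words satisfying $\ell(\vv^i)=n_i$ for each $i$ and $\sum_{i=0}^{t-1}|\vv^i|=r$. So counting $\numcore^t_{r,s}$ reduces to counting such $t$-tuples of justified words.

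The key observation is that a justified binary word is determined completely by its length and its size: if $\vv^i$ is justified with $\ell(\vv^i)=n_i$ and $|\vv^i|=a_i$, then $\vv^i$ is the unique word consisting of $a_i$ ones followed by $n_i-a_i$ zeros. In particular the size $a_i$ of a justified word of length $n_i$ can be any integer in $\{0,1,\dots,n_i\}$, with exactly one justified word realising each such value. Thus the $t$-tuples in bijection with $\Par^t_{r,s}$ are in one-to-one correspondence with weak compositions $(a_0,\dots,a_{t-1})$ of $r$ satisfying $0\le a_i\le n_i$ for each $i$.

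To conclude, I would encode these constrained weak compositions using a generating function. The factor $1+q+\cdots+q^{n_i}$ records the possible values of $a_i$, weighted by $q^{a_i}$, so that
\[
\prod_{i=0}^{t-1}(1+q+\cdots+q^{n_i})=\sum_{(a_0,\dots,a_{t-1})}q^{a_0+\cdots+a_{t-1}},
\]
where the sum ranges over all tuples with $0\le a_i\le n_i$. Extracting the coefficient of $q^r$ selects precisely the tuples whose entries sum to $r$, yielding the claimed formula.

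There is no real obstacle here: \cref{prop:bij_tuple_words} has already done the structural work by translating $t$-cores in the rectangle into $t$-tuples of justified words with prescribed lengths and total weight, and the rest is the elementary observation that a justified word is determined by its size together with a standard generating-function bookkeeping argument.
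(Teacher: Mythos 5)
Your proof is correct and follows essentially the same route as the paper: both reduce the count via \cref{prop:bij_tuple_words} to restricted weak compositions $(a_0,\dots,a_{t-1})$ of $r$ with $0\le a_i\le n_i$, using the fact that a justified word is determined by its length and size, and then read off the coefficient of $q^r$ in the product of truncated geometric series. The only difference is that you spell out the intermediate steps the paper leaves implicit.
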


\begin{proof}
	Let $\rho \in \Par^t_{r,s}$ correspond to the tuple $(\vv^0,\dots,\vv^{t-1})$ of justified binary words given in \eqref{def-v^i} and let $a_i = |\vv^i|$.
	We thus have a natural bijection between $\Par^t_{r,s}$ and the set of restricted weak compositions $(a_0,\dots, a_{t-1}) $ of $r$ satisfying 
	$0\le a_i\le n_i$. 
	{The equality \eqref{numcore} then follows immediately.}
\end{proof}

\begin{remark}
	From the definition of the $t$-core in \cref{sec:prelim}, it is clear that a partition $\lambda$ is a $t$-core if and only if its conjugate $\lambda'$ is a $t$-core. 
	Therefore, we must have $\numcore^t_{r,s} = \numcore^t_{s,r}$. 
\end{remark}

The $t$-core partition (ex-)conjecture, proved by Granville and Ono \cite{granville-ono-1996}, states that the number of partitions of $n$ {that} are $t$-cores is positive for every nonnegative integer $n$ when $t \geq 4$. {One can ask a similar question for partitions contained in rectangles: When is the number of $t$-cores $\lambda$, with exactly $r$ parts and $\lambda_1=s$, positive?}
The following corollary shows that when $r \geq n_j$, $s \leq n_0$ and the maximum hook length $r+s-1$ is not divisible by $t$, there will always exist a $t$-core $\lambda$ inside $\Par_{r,s}$ with $\ell(\lambda) = r$ and $\lambda_1 = s$.
{This result is of course much more elementary.}

\begin{corollary}
	{Let $\widetilde{\numcore}^t_{r,s}$ denote the number of $t$-cores $\lambda$ such that $\lambda_1=s$ and $\ell(\lambda)=r$. }
	{
	\begin{equation}
	\widetilde{\numcore}^t_{r,s} = 	\begin{cases}
	0 & r+s\equiv 1\bmod t, \\
	\ds [q^{r-n_j}] \prod_{\substack{i=1 \\ i \neq j}}^{t-1} (1+q+q^2+\cdots q^{n_i}) & \text{otherwise},
	\end{cases}
	\end{equation}
}
	where $j\equiv r+s-1 \pmod t$.
\end{corollary}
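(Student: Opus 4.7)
The plan is to apply \cref{prop:bij_tuple_words} and translate the two extremal conditions $\ell(\lambda)=r$ and $\lambda_1=s$ into constraints on the tuple $(\vv^0,\dots,\vv^{t-1})$ of justified binary words encoding a $t$-core $\lambda\in\Par^t_{r,s}$. Once these conditions pin down $|\vv^0|$ and $|\vv^j|$ for the appropriate index $j$, the claimed formula will fall out by exactly the same coefficient-extraction argument that proves \cref{prop:numcores}.

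The first substantive step is a short geometric dictionary for the concatenated word $\vv=(w_{-r},\dots,w_{s-1})$. Reading $1$'s as up steps and $0$'s as right steps, $\vv$ traces the lower-right boundary of $\lambda$ inside its $r\times s$ bounding rectangle from $(r,0)$ to $(0,s)$, so $\ell(\lambda)=r$ if and only if the initial step is right (i.e.\ $\vv_0=0$), and $\lambda_1=s$ if and only if the final step is up (i.e.\ $\vv_{r+s-1}=1$). Under the splitting \eqref{def-v^i}, $\vv_0$ is the first entry of $\vv^0$, while $\vv_{r+s-1}$ is the last entry of $\vv^j$ for $j\equiv r+s-1\pmod t$. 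Because each $\vv^i$ is justified, these two requirements force $|\vv^0|=0$ and $|\vv^j|=n_j$, respectively.

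The count then splits into two cases. If $j=0$, equivalently $r+s\equiv 1\pmod t$, the two demands $|\vv^0|=0$ and $|\vv^0|=n_0$ are incompatible whenever $r,s\ge 1$ (since then $n_0\ge 1$), so $\widetilde{\numcore}^t_{r,s}=0$. Otherwise, the remaining sizes $|\vv^i|$ for $i\in\{1,\dots,t-1\}\setminus\{j\}$ range independently over $\{0,\dots,n_i\}$ subject only to $\sum_{i\ne 0,j}|\vv^i|=r-n_j$, which is counted exactly by the asserted coefficient of $q^{r-n_j}$ in $\prod_{i\ne 0,j}(1+q+\cdots+q^{n_i})$. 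The only point that requires honest verification is the orientation convention in the geometric dictionary; I would sanity-check it against the worked example $\lambda=(5,4,4,1)$ discussed in \cref{sec:prelim} before committing the argument to print.
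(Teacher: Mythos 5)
Your proof is correct and follows essentially the same route as the paper: both reduce to the bijection of \cref{prop:bij_tuple_words}, observe that $\ell(\lambda)=r$ and $\lambda_1=s$ translate (via justification of the runner words) into $a_0=0$ and $a_j=n_j$, and finish by coefficient extraction as in \cref{prop:numcores}. Your boundary-path dictionary for why $\vv_0=0$ and $\vv_{r+s-1}=1$ encode these conditions is exactly the step the paper asserts without elaboration, and your orientation convention checks out against the example $(5,4,4,1)$.
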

\begin{proof}
	{Let $\lambda$ be a $t$-core with $\lambda_1=s$ and $\ell(\lambda)=r$, and consider the tuple $(a_0,\dots,a_{t-1})$ defined in the proof of \cref{prop:numcores}. The conditions $\ell(\lambda)=r$ and $\lambda_1=s$ are equivalent to $a_0=0$ and $a_j=n_j$ respectively. Consequently, the computation of $\widetilde{\numcore}^t_{r,s}$ involves selecting $q^0$ and $q^{n_j}$ in the terms corresponding to $i=0$ and $i=j$ in the product from \eqref{numcore} and the other coefficients being arbitrary.
	}
\end{proof}

\begin{corollary}[{\cite[Theorem 2.4(2)]{Zhong-2020}}]
	\label{rem:larges}
	When $s$ is much larger than $r$ ($s\ge rt$ suffices), the number of $t$-cores inside an $r\times s$ rectangle is given by the explicit formula
	\[
	\numcore^t_{r,s} = \binom{r+t-1}{r}.
	\]
\end{corollary}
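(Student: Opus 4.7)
The plan is to apply \cref{prop:numcores} and show that the hypothesis $s \geq rt$ forces every upper bound $n_i$ to be at least $r$, so that the coefficient extraction in \eqref{numcore} reduces to counting unrestricted weak compositions.

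First I would verify the size bound on the $n_i$. Since $n_i = \lfloor (r+s+t-1-i)/t \rfloor \geq \lfloor (r+s)/t \rfloor$ for $0 \leq i \leq t-1$, and since $s \geq rt$ gives $r+s \geq r(t+1)$, one obtains
\[
n_i \;\geq\; \left\lfloor \frac{r(t+1)}{t} \right\rfloor \;=\; r + \left\lfloor \frac{r}{t} \right\rfloor \;\geq\; r.
\]
Thus each factor $1 + q + \cdots + q^{n_i}$ in \eqref{numcore} contains every monomial $q^0, q^1, \ldots, q^r$.

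Next, recalling the bijection from the proof of \cref{prop:numcores} between $\Par^t_{r,s}$ and weak compositions $(a_0, \ldots, a_{t-1})$ of $r$ with $0 \leq a_i \leq n_i$, the condition $n_i \geq r$ makes the upper bound vacuous: any weak composition of $r$ into $t$ parts automatically satisfies $a_i \leq r \leq n_i$. Hence $\numcore^t_{r,s}$ equals the number of unrestricted weak compositions of $r$ into $t$ parts, which by a stars-and-bars argument equals $\binom{r+t-1}{r}$.

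There is no real obstacle here; the only minor point to check is that the inequality $s \geq rt$ is indeed sufficient (rather than needing, say, $s \geq (r-1)t$ or similar), and the computation above shows $s = rt$ is the natural threshold at which the constraints $a_i \leq n_i$ first become irrelevant for all $i$ simultaneously.
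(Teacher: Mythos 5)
Your proof is correct and is exactly the derivation the paper intends: the statement is presented as a corollary of \cref{prop:numcores}, and your observation that $s \ge rt$ forces every $n_i \ge r$, so that the coefficient extraction in \eqref{numcore} counts unrestricted weak compositions of $r$ into $t$ parts, is the whole argument. One quibble with your closing aside: the binding constraint is $n_{t-1} = \lfloor (r+s)/t \rfloor \ge r$, which holds as soon as $s \ge r(t-1)$, so $s \ge rt$ is sufficient but not the sharp threshold; this does not affect the validity of your proof of the stated claim.
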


We note that the bijection used by Zhong~\cite{Zhong-2020} is different from that given in \cref{prop:bij_tuple_words}.

Observe that $\numcore^t_{r,s}$ is a polynomial of degree $(t-1)$ in $r$ when $s$ is large.
The behavior of $\numcore^t_{r,s}$ is more interesting when $r$ and $s$ are comparable. 

\begin{theorem}
	\label{thm:asymp-numcores}
	Let $\kappa\ge1$ be a real number. As $r,s\to \infty$ such that $s/r\to \kappa$, 
	\begin{equation}
	\lim\limits_{r,s\to\infty}\frac{\numcore^t_{r,s}}{r^{t-1}}=\frac{A(t,\kappa)}{t^{t-1}},
	\end{equation} 
	where 
	\begin{equation}
	\label{Atm}
	A(t,\kappa)= \frac{1}{(t-1)!}
	\sum_{j=0}^{\big\lfloor \frac{t}{\kappa+1} \big\rfloor}
	(-1)^j\binom{t}{j} \left( t-(1+\kappa)j \right)^{t-1}. 
	\end{equation}
\end{theorem}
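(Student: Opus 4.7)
The plan is to start from the generating function identity in \cref{prop:numcores} and turn it into a closed sum of binomial coefficients via partial fractions, then extract the leading asymptotic as $r,s\to\infty$.

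First I would rewrite
\[
\prod_{i=0}^{t-1}(1+q+\cdots+q^{n_i}) \;=\; \frac{\prod_{i=0}^{t-1}(1-q^{n_i+1})}{(1-q)^{t}}.
\]
Expanding the numerator by inclusion--exclusion and using the elementary identity $[q^{m}](1-q)^{-t}=\binom{m+t-1}{t-1}$ (interpreted as zero for $m<0$) gives
\[
\numcore^{t}_{r,s}
=\sum_{S\subseteq\{0,\dots,t-1\}}(-1)^{|S|}
\binom{\,r+t-1-\sum_{i\in S}(n_i+1)\,}{t-1},
\]
with the convention that the binomial vanishes when its upper argument is negative. This is the exact formula I would then asymptotically evaluate.

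Next I would group the sum by $|S|=j$. Because the $n_i$ defined in \eqref{defni} all equal $(r+s)/t$ up to $O(1)$, every subset of size $j$ gives $\sum_{i\in S}(n_i+1)=j(r+s)/t+O(1)$, so each of the $\binom{t}{j}$ subsets of size $j$ contributes the same leading behavior. Substituting $s/r\to\kappa$, the upper binomial argument becomes
\[
r\Bigl(1-\tfrac{j(1+\kappa)}{t}\Bigr)+O(1)=\tfrac{r}{t}\bigl(t-j(1+\kappa)\bigr)+O(1),
\]
and a binomial $\binom{N+t-1}{t-1}$ with $N$ tending to infinity linearly in $r$ satisfies $\binom{N+t-1}{t-1}=N^{t-1}/(t-1)!+O(N^{t-2})$. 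Therefore the contribution of each $S$ with $|S|=j$ to $\numcore^{t}_{r,s}/r^{t-1}$ tends to
\[
\frac{1}{(t-1)!\,t^{t-1}}\bigl(t-j(1+\kappa)\bigr)^{t-1},
\]
provided $j<t/(1+\kappa)$ so that the argument is eventually nonnegative; for $j>t/(1+\kappa)$ the binomial is zero for all large $r$. Summing over $j$ with the sign $(-1)^{j}$ and multiplicity $\binom{t}{j}$ yields \eqref{Atm}.

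The only real subtlety — and the step I would treat most carefully — is the behavior at the cutoff $j=t/(1+\kappa)$. If this value is not an integer, then $\lfloor t/(1+\kappa)\rfloor$ is strictly below it and the sharp cutoff $j\le\lfloor t/(1+\kappa)\rfloor$ in \eqref{Atm} is automatic. If it is an integer, one has to check that although $r-\sum_{i\in S}(n_i+1)$ may be of order $O(1)$ and change sign with $r,s$, the corresponding prefactor $(t-j(1+\kappa))^{t-1}$ vanishes identically, so the term contributes $0$ to the limit and the boundary is harmless. Aside from this check, the argument is a routine extraction of the leading term, and the identity $\numcore^{t}_{r,s}/r^{t-1}\to A(t,\kappa)/t^{t-1}$ follows.
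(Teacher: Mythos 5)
Your argument is correct and follows essentially the same route as the paper: rewrite the generating function of \cref{prop:numcores} as $\prod_i(1-q^{n_i+1})/(1-q)^t$, extract the coefficient of $q^r$ as an alternating sum of binomial coefficients, and keep only the leading term of each summand. The only (harmless) differences are that the paper first sandwiches to the case where $r$ and $s$ are multiples of $t$ so that all $n_i$ coincide and the inclusion--exclusion collapses to a single sum over $j$, whereas you keep general $n_i$ and group subsets by size, and you are somewhat more explicit about the boundary term $j=t/(1+\kappa)$ (where, strictly, the binomial argument is $o(r)$ rather than $O(1)$ since only $s/r\to\kappa$ is assumed, but the conclusion that it contributes nothing to the limit is unaffected).
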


\begin{proof}
	Let $(r_k)$ and $(s_k)$ be a sequence of positive integers such that $\lim_{k\to\infty} r_k= \infty$, $\lim_{k\to\infty} s_k \allowbreak = \infty$ and $\lim_{k\to\infty} s_k/r_k=\kappa$. It is enough to assume that the sequences $(r_k)$ and $(s_k)$ are multiples of $t$. Otherwise, we can use the sandwich theorem to bound the limit of $\numcore^t_{r_k,s_k}/r_k^{t-1}$ using 
	\[
	P_{c_k,d_k}^{t} \le P_{r_k,s_k}^{t} \le P_{c_k+t,d_k+t}^{t} \,,
	\]
	where $c_k=t\lfloor r_k/t \rfloor$ and $d_k=t\lfloor s_k/t \rfloor $.
	
	When $r_k$ and $s_k$ are multiples of $t$, 
	the $n_i$'s defined in \eqref{defni} are independent of $i$.
	We set $n_k=(r_k+s_k)/t$ to be this common value. Then \cref{prop:numcores} implies
	\begin{align*}
	\numcore^t_{r_k,s_k}&=[x^{r_k}]\bigg(\frac{1-x^{n_k+1}}{1-x}\bigg)^t\\
	&=\sum_{j=0}^{t}(-1)^j\binom{t}{j}[x^{r_k}]\frac{x^{(n_k+1)j}}{(1-x)^t}\\
	&= \sum_{j=0}^{\big\lfloor \frac{r_k}{n_k+1} \big\rfloor}(-1)^j\binom{t}{j}\binom{r_k-(n_k+1)j+t-1}{t-1}.
	\end{align*}
	Furthermore, we observe that for $k$ large enough
	\[ 
	\bigg\lfloor \frac{r_k}{n_k+1} \bigg\rfloor
	=\bigg\lfloor \frac{t}{1+s_k/r_k+t/r_k} \bigg\rfloor
	\leq \bigg\lfloor \frac{t}{1+\kappa} \bigg\rfloor.
	\]
	Thus for $k$ large enough,
	\begin{align*}
	\numcore^t_{r_k,s_k}=\sum_{j=0}^{\big\lfloor \frac{t}{\kappa+1} \big\rfloor}(-1)^j\binom{t}{j}\binom{r_k-(n_k+1)j+t-1}{t-1}.
	\end{align*}
	Observe that each of the above summand is a polynomial in $r_k$ and $s_k$ of degree $t-1$. We can pick the leading terms to obtain that for $k$ large enough,
	\begin{align*}
	\numcore^t_{r_k,s_k}& \approx
	\frac{1}{(t-1)!}
	\sum_{j=0}^{\big\lfloor \frac{t}{\kappa+1} \big\rfloor} 
	(-1)^j\binom{t}{j}(r_k-n_k j)^{t-1}
	\\
	&= \frac{1}{(t-1)!}
	\bigg(\frac{r_k}{t}\bigg)^{t-1}
	\sum_{j=0}^{\big\lfloor \frac{t}{\kappa+1} \big\rfloor} (-1)^j\binom{t}{j} 
	\bigg(t- \left( 1 + \frac{s_k}{r_k} \right) j \bigg)^{t-1},
	\end{align*}
	where we have omitted the lower order contribution to the binomial coefficient. This completes the proof.
\end{proof}

It turns out that the limiting formula for the case of $s_k=r_k$, namely  $A(t,1)$, has appeared in the literature in other contexts.

\begin{remark}
	Goddard~\cite{goddard-1945} showed that $A(t,1)$ can be expressed as
	\begin{equation}
	\label{At1}
	A(t,1)=\frac{1}{\pi}\int_{0}^{\infty}\bigg(\frac{2\sin x}{x} \bigg)^tdx. 
	\end{equation}
	We also note in passing that $A(t,1)$ 
	was also studied by Swanepoel \cite[Theorem 1]{swanepoel-2015} where he proved the surprising identity,
	\[
	\sum_{n=1}^{\infty}\frac{\sin^t(nx)}{n^t}=\frac{\pi }{2}\bigg(\frac{x}{2}\bigg)^{t-1} A(t,1)-\frac{x^{t}}{2},
	\]
	for any positive integer $t \ge 2$ and $0\le x\le 2\pi/t$.
\end{remark}

\section{Partitions with fixed core}
\label{sec:num-fixedcore}

Let $\rho \in \Par_{r,s}^t$ be a $t$-core partition. Denote 
\[
\D^t_{r,s}(\rho)=\{\lambda \in \Par_{r,s} \mid \core_t(\lambda)=\rho \}
\]
for partitions contained in the {$r \times s$} rectangle with $t$-core equal to $\rho$.

\begin{theorem}
	\label{thm:partition_fixed_core}
	Let $\rho \in \Par_{r,s}^t$ be a $t$-core. Then we have 
	\[
	\sum_{\lambda \in \D^t_{r,s}(\rho)} q^{|\lambda|} =q^{|\rho|} \prod_{i=0}^{t-1}\qbinom{n_i}{a_i}_{q^t},
	\]
	where $n_i$'s are defined in \eqref{defni}, $a_i = |\vv^i|$ and $\vv^i$'s are defined in \eqref{def-v^i}.
\end{theorem}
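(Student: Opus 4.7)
The plan is to extend the bijective argument of \cref{prop:bij_tuple_words} and \cref{prop:numcores} by keeping track of the size statistic. I would first observe that for $\lambda\in\Par_{r,s}$ with associated $t$-tuple of binary words $(\vv^0,\dots,\vv^{t-1})$, the core of $\lambda$ is completely determined by the weights $(|\vv^0|,\dots,|\vv^{t-1}|)$. Indeed, by \cref{rem:posjus}, the positions of justification $(p_0,\dots,p_{t-1})$ of $\core_t(\lambda)$ are explicit affine functions of $(|\vv^0|,\dots,|\vv^{t-1}|)$ only, and the $t$-core is determined by these positions. Consequently, $\D^t_{r,s}(\rho)$ is in bijection with the set of tuples $(\vv^0,\dots,\vv^{t-1})$ of binary words with $\ell(\vv^i)=n_i$ and $|\vv^i|=a_i$ for all $i$.

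Next I would use the Littlewood decomposition $|\lambda|=|\rho|+t(|\mu^0|+\cdots +|\mu^{t-1}|)$ to separate the contributions of the core and the quotient to $|\lambda|$. The task then reduces to identifying the $i$-th part $\mu^i$ of the $t$-quotient with the partition canonically associated to the binary word $\vv^i$ via the encoding of \cref{prop:bij_tuple_words} (up-step/right-step). This is essentially the content of $\psi(w^i)=(p_i,\mu^i)$: since the runner $w^i$ is forced to be all $1$'s at positions below the segment defining $\vv^i$ and all $0$'s above it, $\mu^i$ lies in $\Par_{a_i,n_i-a_i}$ and its Young diagram is the partition traced out by $\vv^i$.

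Given this identification, the generating function factorizes. Summing over all admissible tuples, and using the $q$-binomial identity \eqref{qbinom} in the form
\begin{equation*}
\sum_{\substack{\ell(\vv^i)=n_i \\ |\vv^i|=a_i}} q^{|\mu^i|} = \qbinom{n_i}{a_i}_q,
\end{equation*}
while replacing $q$ by $q^t$ to account for the factor of $t$ in the Littlewood decomposition, yields
\begin{equation*}
\sum_{\lambda\in\D^t_{r,s}(\rho)}q^{|\lambda|}=q^{|\rho|}\prod_{i=0}^{t-1}\qbinom{n_i}{a_i}_{q^t},
\end{equation*}
as desired.

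The main obstacle is the second step, namely verifying that the $t$-quotient component $\mu^i$ extracted from $\lambda$ coincides with the partition coming from the finite word $\vv^i$ under the up/right-step encoding, and in particular that $\mu^i \in \Par_{a_i,n_i-a_i}$. I would handle this by working directly with the balanced abacus $w=\psi^{-1}(0,\lambda)$: outside the window $[-r,s-1]$ the runner $w^j$ is constantly $1$ to the left and $0$ to the right, so after shifting by $p_j$ the resulting zero-offset abacus encodes exactly the partition traced by $\vv^i$, matching $\mu^i$ on the nose, including the rectangle bound. Once this is in hand, the rest is bookkeeping.
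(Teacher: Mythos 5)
Your proposal is correct and follows essentially the same route as the paper: encode $\lambda$ by the $t$-tuple of binary words from \cref{prop:bij_tuple_words}, note that fixing the core $\rho$ fixes the sizes $a_i=|\vv^i|$, identify each $\vv^i$ with a quotient component $\mu^i\in\Par_{a_i,n_i-a_i}$, and apply the Littlewood decomposition together with \eqref{qbinom} in the variable $q^t$. The only difference is that you spell out the verification that $\mu^i$ really is the partition traced by $\vv^i$ (including the rectangle bound), which the paper asserts without detail; this is a welcome but not essential elaboration.
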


\begin{proof}
	For any partition $\lambda\in  \D^t_{r,s}(\rho)$, let $(\vv^0, \dots, \vv^{t-1})$ be the $t$-tuple of finite words defined in \eqref{def-v^i}.
	Moreover, the size is $|\vv^i|=a_i$ and the length is $\ell(\vv^i)=n_i$.
	Let $\mu^i$ be the partition in $\Par_{a_i,n_i-a_i}$ corresponding to $\vv^i$. The partitions $\mu^0,\dots ,\mu^{t-1}$ are the $t$-quotients of $\lambda$ (up to a cyclic shift of indices). The partition division theorem gives us 
	\begin{equation}
	|\lambda|=|\rho|+t(|\mu^0|+\cdots +|\mu^{t-1}|).
	\end{equation}
	The required result follows by noting that 
	\begin{align*}
	\sum_{\mu^i\in\Par_{a_i,n_i-a_i} }^{}q^{t|\mu^i|}=\qbinom{n_i}{a_i}_{q^t}
	\end{align*}
	{using \eqref{qbinom}.}
\end{proof}

\begin{example}
	Let $t=3$ and fix the $3$-core $(2,2,1,1)$ with positions of justification given by $p = (1, 1, -2)$. Using \cref{rem:posjus},
	we get $a = (|\vv^0|, |\vv^1|, |\vv^2|) = (0, 2, 2)$.
	For $r = 4$ and $s = 5$, we have $n_0 = n_1 = n_2 = 3$.
	Then
	\[
	\D^3_{4,5}(2,2,1,1) = \left\{
	\substack{ \ds (2, 2, 1, 1),
		(4, 3, 1, 1),
		(4, 3, 3, 2),\\
		\ds  (5, 2, 1, 1), 
		(5, 3, 3, 1),
		(5, 5, 1, 1), \\
		\ds  (5, 5, 3, 2),
		(5, 5, 4, 1),
		(5, 5, 4, 4)}
	\right\},
	\]
	and 
	\[
	\sum_{\lambda \in \D^3_{4,5}(2, 2, 1, 1)} q^{|\lambda|}
	= q^6(q^6 + q^3 + 1)^2 
	= q^{|(2, 2, 1, 1)|} \qbinom 30_{q^3} \qbinom 32_{q^3} \qbinom 32_{q^3},
	\]
	as expected.
\end{example}

Setting $q=1$ in \cref{thm:partition_fixed_core} leads to:

\begin{corollary}
	\label{cor:partition_fixed_core}
	Let $\rho \in \Par_{r,s}^t$ be a $t$-core. 
	The number of partitions $\lambda$ in $\D^t_{r,s}(\rho)$ is
	$\ds \prod_{i=0}^{t-1} \binom{n_i}{a_i}$.
\end{corollary}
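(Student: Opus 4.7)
The plan is to derive this corollary immediately from \cref{thm:partition_fixed_core} by specializing $q \to 1$. First I note that the left-hand side generating function $\sum_{\lambda \in \D^t_{r,s}(\rho)} q^{|\lambda|}$ evaluated at $q=1$ simply enumerates $\D^t_{r,s}(\rho)$ with multiplicity one, giving the cardinality $|\D^t_{r,s}(\rho)|$. On the right-hand side, the prefactor $q^{|\rho|}$ becomes $1$, and each $q$-binomial coefficient $\qbinom{n_i}{a_i}_{q^t}$ specializes to the ordinary binomial coefficient $\binom{n_i}{a_i}$, by the standard fact that $\qbinom{n}{k}_q \big|_{q=1} = \binom{n}{k}$ (which is immediate from the product formula for the $q$-binomial, or from its interpretation as enumerating binary words of length $n$ with $k$ ones, weighted by area).

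As a sanity check, I would also record the direct combinatorial argument, which does not require passing through the $q$-refinement. By \cref{prop:bij_tuple_words}, a partition $\lambda \in \Par_{r,s}$ corresponds to a $t$-tuple $(\vv^0,\dots,\vv^{t-1})$ of binary words with $\ell(\vv^i) = n_i$ and $\sum_i |\vv^i| = r$. The condition $\core_t(\lambda) = \rho$ is equivalent, via the description in \cref{rem:posjus} together with \cref{prop:bij_tuple_words}, to fixing the sizes $|\vv^i| = a_i$ (since the positions of justification determine $\rho$, and vice versa). Once the sizes $a_i$ are fixed, each $\vv^i$ may be any of the $\binom{n_i}{a_i}$ binary words of length $n_i$ with exactly $a_i$ ones, independently across $i$, so the total count is $\prod_{i=0}^{t-1} \binom{n_i}{a_i}$.

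There is no real obstacle here: the content of the corollary lies in \cref{thm:partition_fixed_core}, and its $q=1$ specialization is essentially automatic. The only minor point worth stating explicitly is that the values $a_i$ on the right-hand side are well-defined quantities attached to $\rho$ (they do not depend on the particular $\lambda \in \D^t_{r,s}(\rho)$), which is exactly the content of \cref{rem:posjus}.
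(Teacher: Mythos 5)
Your proposal is correct and matches the paper exactly: the corollary is obtained by setting $q=1$ in \cref{thm:partition_fixed_core}, with the $q$-binomial coefficients specializing to ordinary binomial coefficients. The additional direct combinatorial argument you sketch is a valid cross-check but is not needed.
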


\noindent
Recall that the \emph{hypergeometric distribution} {with parameters $(N,K,n)$} describes the probability of $k$ successes in $n$ draws (without replacement) from a population of size $N$ of which $K$ are successes. 
{
	If $X$ is the hypergeometric random variable, the probability mass function is given by 
	\[
	\mathbb{P}(X = k) = \frac{\binom{K}{k} \binom{N-K}{n-k}}{\binom{N}{n}}.
	\]
}
The $t$-dimensional \emph{multivariate hypergeometric distribution}~\cite[Section 7.3.3]{mccullagh-nelder-1989} is a natural generalisation, 
which we will now describe using the above notation for convenience.
Consider a population of size $r+s$ {that} has $t$ kinds of objects whose numbers are described by a tuple $(n_0,\dots,n_{t-1})$ of nonnegative integers summing to $r+s$. 
Then this distribution describes the probability of observing $a_i$ individuals of type $i$, where $0 \leq a_i \leq n_i$,  in $r$ draws without replacement.
	Let $A = (A_0,\dots,A_{t-1})$ be a $t$-dimensional multivariate hypergeometric random vector with parameters $(n_0,\dots,n_{t-1})$ (summing to $r+s$) and $r$. Then, its probability mass function is given by
	\begin{equation}
	\label{pmf-mult-hyper}
	\mathbb{P} \big((A_0,\dots,A_{t-1}) =  (a_0,\dots,a_{t-1}) \big) = 
	\frac{\ds \prod_{i=0}^{t-1} \binom{n_i}{a_i}}{\ds\binom{r+s}{r}}.
	\end{equation}
The fact that this is a probability distribution follows from the multinomial theorem. {Note that $A_i$ has hypergeometric distribution with parameters $(r+s,n_i,r)$.}
We will need the following {standard} facts; see~\cite{johnson-et-al-1997}, for example. The expectation and variance of $A_i$ are given by
\begin{equation}
\label{expvar}
\begin{split}
\mathbb{E}(A_i) &= \frac{r\  n_i}{r+s}, \\
\text{Var}(A_i) &= \frac{r\  s\  n_i}{(r+s)(r+s-1)} \left(1 - 
\frac{n_i}{r+s} \right).
\end{split}
\end{equation}

From \cref{cor:partition_fixed_core}, it is clear that the {random vector $(A_0, \dots, A_{t-1})$ corresponding to the core, 
	$\core_t(\lambda)$, of} a uniformly random partition $\lambda$ in $\Par_{r,s}$ has the $t$-dimensional multivariate hypergeometric distribution with parameters $(n_0,\dots,n_{t-1})$ that sum to $r+s$, and $r$.

\begin{proposition}
	\label{prop:exp}
	The expectation of the size of the $t$-core of a uniformly random partition in $\Par_{r,s}$ equals
	{\begin{equation}\label{eq:expected-tcore-explicit-formula}
		\frac{r s}{(r+s)(r+s-1)}
		\bigg( \binom{r+s \bmod t}{2}+\bigg\lfloor\frac{r+s}{t} \bigg\rfloor \binom{t}{2} \bigg).
		\end{equation}}
\end{proposition}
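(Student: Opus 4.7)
My plan is to compute $\mathbb{E}|\core_t(\lambda)|$ directly from \eqref{core-size}. By \cref{rem:posjus}, the positions of justification satisfy $p_{\sigma(i)} = A_i - m_i$, where $A_i := |\vv^i|$, $m_i := \lfloor(r+t-1-i)/t\rfloor$, and $\sigma(i) := (i-r)\bmod t$. Set $B_i := A_i - m_i$ and $N := r+s$. By the discussion preceding the proposition, $(A_0,\dots,A_{t-1})$ follows the multivariate hypergeometric distribution with parameters $(n_0,\dots,n_{t-1})$ (summing to $N$) and $r$, so $\mathbb{E}(A_i)$ and $\text{Var}(A_i)$ are available from \eqref{expvar}. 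Using the elementary identity $\tfrac{t}{2}x^2 + yx = \tfrac{1}{2t}[(tx+y)^2 - y^2]$, I rewrite
\[
|\core_t(\lambda)| = \frac{1}{2t}\sum_{i=0}^{t-1}\Bigl[(tB_i + \sigma(i))^2 - \sigma(i)^2\Bigr],
\]
so that, after taking expectations and using $\text{Var}(tB_i + \sigma(i)) = t^2\text{Var}(A_i)$, the computation reduces to evaluating $\sum_i \text{Var}(A_i)$ and $\sum_i (t\,\mathbb{E}(B_i) + \sigma(i))^2$.

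The crucial observation is the identity $tm_i - \sigma(i) = r - i$, valid for every $i \in \{0,\dots,t-1\}$. This follows from the equivalent description $m_i = \lceil(r-i)/t\rceil$ by a short case check on whether $t$ divides $r-i$. The analogous identity $tn_i - \sigma'(i) = N - i$ holds with $\sigma'(i) := (i-a)\bmod t$, where $a := N\bmod t$. Combined, these yield the clean expression
\[
t\,\mathbb{E}(B_i) + \sigma(i) = \frac{is + r\,\sigma'(i)}{N}.
\]

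What remains is algebra. Since $\sigma'$ permutes $\{0,\dots,t-1\}$, we have $\sum_i \sigma'(i)^2 = \sum_i i^2$, and splitting on whether $i < a$ gives $\sum_i i\,\sigma'(i) = \sum_i i^2 - a\binom{t}{2} + t\binom{a}{2}$. Writing $N = tb + a$ with $b := \lfloor N/t \rfloor$, so that $n_i = b+1$ for $i<a$ and $n_i = b$ otherwise, one obtains $N^2 - \sum_i n_i^2 = (t-1)(N^2-a^2)/t + a(a-1)$, which pins down $\sum_i \text{Var}(A_i)$ via \eqref{expvar}. Assembling the pieces, pulling out the common factor $rs/(2N^2(N-1))$, and applying the algebraic identity $(t-1)N - a(t-a) = bt(t-1) + a(a-1)$ yields \eqref{eq:expected-tcore-explicit-formula}.

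The hardest step is spotting the identity $tm_i - \sigma(i) = r-i$; it is what makes the seemingly asymmetric sums collapse. Without it, the computation splinters into case analyses on $r \bmod t$ and $a$, obscuring the final closed form.
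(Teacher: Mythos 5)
Your proof is correct, and it rests on the same probabilistic input as the paper's: identify the core of a uniform $\lambda\in\Par_{r,s}$ with the multivariate hypergeometric vector $(A_0,\dots,A_{t-1})$, express $|\core_t(\lambda)|$ through \eqref{core-size} and \eqref{posofjustification}, and reduce to the first two moments in \eqref{expvar}. The difference is in the execution of the general (non-divisible) case. The paper first treats $t\mid r$, $t\mid s$ (where the linear terms vanish and only $\sum_i\mathrm{Var}(A_i)$ survives), then for general $r=kt+\epsilon$, $s=\ell t+\theta$ splits the sum at $i=\epsilon$ and leaves the final simplification, with its subcases $\epsilon+\theta<t$ versus $\epsilon+\theta\ge t$, to the reader. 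Your identity $t m_i-\sigma(i)=r-i$ (and its analogue $t n_i-\sigma'(i)=N-i$) collapses exactly this case analysis: it yields the uniform expression $t\,\mathbb{E}(B_i)+\sigma(i)=(is+r\sigma'(i))/N$, after which the sum $\sum_i (is+r\sigma'(i))^2$ telescopes against $N^2\sum_i i^2$ and the whole computation goes through without conditioning on $r\bmod t$ or $s\bmod t$. I checked the key intermediate claims ($\sum_i i\,\sigma'(i)=\sum_i i^2-a\binom{t}{2}+t\binom{a}{2}$, the formula for $N^2-\sum_i n_i^2$, and the final identity $(t-1)N-a(t-a)=bt(t-1)+a(a-1)$) and the assembled bracket does equal $tN\bigl(a(a-1)+bt(t-1)\bigr)$, giving \eqref{eq:expected-tcore-explicit-formula}. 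So your argument is not a new route, but it carries out, in a cleaner and genuinely case-free way, precisely the details the paper omits; it would make a good replacement for the second half of the published proof.
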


\begin{proof}
	The expected size of the core of a uniformly random partition $\lambda \in \Par_{r,s}$ is, by definition,
	\begin{equation*}
	\mathbb{E}(|\core_t(\lambda)|) =  \binom{r+s}{r}^{-1}\sum_{\rho \in \Par^t_{r,s}}	|\rho| \; |\D^t_{r,s}(\rho)|.
	\end{equation*}
	Let the $t$-core $\rho$ correspond to the tuple $(a_0, \dots, a_{t-1})$ according to \cref{thm:partition_fixed_core}.
	The probability of observing a fixed core $\rho$ is then given by \eqref{pmf-mult-hyper} using \cref{cor:partition_fixed_core}.
	
	{
		We first focus on the case when both $r$ and $s$ are divisible by $t$.
		Then it is easy to see from \eqref{defni} that $n_i = (r+s)/t$ for all $i$
		and from \cref{rem:posjus} that the position of justification is $p_i = a_i - r/t$ for all $i$. 
		Therefore, we may express the size of the core  using \eqref{core-size} as
		\[
		|\rho|=\sum_{i=0}^{t-1} \bigg( \frac{t}{2} \left( a_i - \frac{r}{t} \right)^2
		+ i \left(a_i-\frac{r}{t} \right) \bigg).
		\]
		In terms of the $t$-dimensional hypergeometric distribution, the expected size of the $t$-core can be written as
		\[
		\mathbb{E}(|\core_t(\lambda)|)=	
		\mathbb{E} \left(
		\sum_{i=0}^{t-1} \bigg( \frac{t}{2} \left( A_i - \frac{r}{t} \right)^2
		+ i \left(A_i-\frac{r}{t} \right) \bigg)
		\right).
		\]
		Since $\mathbb{E}(A_i)=r/t$ from \eqref{expvar}, the second term cancels and we obtain
	}
	\begin{align*}
	\mathbb{E}(|\core_t(\lambda)|)=	\frac{t}{2}\sum_{i=0}^{t-1}\text{Var}(A_i) = \frac{(t-1)}{2}\cdot \frac{r\  s\ }{(r+s-1)}.
	\end{align*}
	
	When $r$ and $s$ are not multiples of $t$, write $r=kt+\epsilon$ and $s=\ell t+\theta$ for $0\le \epsilon,\theta<t$. 
	Using \eqref{core-size} and \eqref{posofjustification}, the size of the $t$-core is given by
	\begin{align*}
	|\rho|=&\sum_{i=0}^{\epsilon-1} \bigg( \frac{t}{2} \left( a_i - (k+1) \right)^2
	+ (t-\epsilon+i) \left(a_i-(k+1) \right) \bigg)\\
	&+\sum_{i=\epsilon}^{t-1} \bigg( \frac{t}{2} \left( a_i - k \right)^2
	+ (i-\epsilon) \left(a_i-k \right) \bigg).
	\end{align*}
	We can express the expectation $\mathbb{E}(|\rho|)$ in terms of the expectation and the variance of the random variables $A_0, \dots, A_{t-1}$. After simplifying, we obtain 
	\begin{align*}
	\mathbb{E}(|\rho|)&= \sum_{i=0}^{\epsilon-1}\bigg( \frac{t}{2} 
	\left( \mathbb{E}(A_i) - (k+1) \right)^2
	+ (t-\epsilon+i) \left(\mathbb{E}(A_i)-(k+1) \right) \bigg)\\
	+& \sum_{i=\epsilon}^{t-1}\bigg( \frac{t}{2} 
	\left( \mathbb{E}(A_i) - k \right)^2
	+ (i-\epsilon) \left(\mathbb{E}(A_i)-k \right) \bigg)
	+ \frac{t}{2} \sum_{i=0}^{t-1}\text{Var}(A_i).
	\end{align*}
	Now, let {$\psi=(\epsilon+\theta) \bmod t$} and
	\[
	\delta = \begin{cases}
	0 &  \epsilon+\theta<t,\\
	1 & \epsilon+ \theta \ge t.
	\end{cases}
	\] 
	Then 
	\[
	n_i=\begin{cases}
	k+\ell+\delta+1 & 0\le i\le \psi-1,\\
	k+\ell+\delta & \psi\le i\le t-1.
	\end{cases} 
	\]
	We obtained an expression for $\mathbb{E}(A_i)$ and $\text{Var}(A_i)$ by substituting the above values of $n_i$ in \eqref{expvar}. The required expression \eqref{eq:expected-tcore-explicit-formula} is obtained by carefully simplifying the above expression in the two cases: $\epsilon+\theta<t$ and $\epsilon+\theta\ge t$.	The details are left to the interested reader.
\end{proof}

\section{Limiting distribution of the size of a $t$-core}
\label{sec:limsize}

We first look at the size of the $t$-core of a uniformly random partition in $\Par_{r,s}$ in the limit of $s \to \infty$ for a fixed $r$. By \cref{rem:larges}, the number of $t$-cores in this case is finite and given by $\binom{r+t-1}{r}$. 

Recall that the \emph{multinomial distribution} is the generalization of the binomial distribution where there are $t$ types of objects with probabilities $q_0, \dots, q_{t-1}$ each of being selected. 
Let $N_i$ be the number of objects of type $i$ selected in $n$ draws with replacement for $0 \leq i \leq t-1$.
Then the probability of seeing the tuple $(n_0, \dots, n_{t-1})$ in $n$ draws is given by the probability mass function
\[
\mathbb{P} \big((N_0, \dots, N_{t-1}) =  (n_0, \dots, n_{t-1}) \big) = 
\binom{n}{n_0, \dots, n_{t-1}} \prod_{i=0}^{t-1} q_i^{n_i}.
\]
If all $q_i$'s are equal to $1/t$, we say that the multinomial distribution is \emph{central}.

For a discrete random variable $X$ taking values in $\mathbb{N}$, the \emph{probability generating function} is given by
\[
\phi(z) = \sum_{n=0}^\infty \mathbb{P}(X = n) z^n.
\]
By taking the limit of the multivariate hypergeometric distribution as $s \to \infty$, we obtain:

\begin{proposition}
	\label{prop:prob-gen-fn}
	The probability generating function of the size of the $t$-core of a uniformly random partition $\lambda\in \Par_{r,s}$ as $s\to \infty$
	is given by
	\begin{align*}
	\phi(z) = \sum_{\rho \in \Par_{r,rt}^t}
	\frac{z^{|\rho|}}{t^r} \binom{r}{a_0,\dots,a_{t-1}},
	\end{align*}
	{where $\rho \in \Par_{r,rt}^t$ corresponds to $(v^0,\dots,v^{t-1})$ 
	as in \cref{prop:bij_tuple_words}  and $a_i=|v^i|$ for all $i$}.
	
\end{proposition}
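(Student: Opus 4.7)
The plan is to take the $s\to\infty$ limit termwise in the probability mass function of the multivariate hypergeometric distribution described in~\eqref{pmf-mult-hyper}, which (by the paragraph preceding \cref{prop:exp}) governs the distribution of the $t$-core of a uniformly random partition in $\Par_{r,s}$. By \cref{rem:larges}, once $s\ge rt$ the set $\Par^t_{r,s}$ of $t$-cores in the rectangle stabilizes and coincides with $\Par^t_{r,rt}$; indeed both are finite, one is contained in the other, and they share the cardinality $\binom{r+t-1}{r}$. Via \cref{prop:bij_tuple_words}, the elements of this stable set are indexed by weak compositions $(a_0,\dots,a_{t-1})$ of $r$. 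Consequently the PGF is, for all sufficiently large $s$, a finite sum over a fixed index set, so no measure-theoretic issue arises in interchanging limit and sum.

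For each fixed $\rho \in \Par^t_{r,rt}$ corresponding to $(a_0,\dots,a_{t-1})$, I would pass to the limit in
\[
\mathbb{P}(\core_t(\lambda) = \rho) = \frac{\prod_{i=0}^{t-1} \binom{n_i}{a_i}}{\binom{r+s}{r}}.
\]
From~\eqref{defni} one has $n_i = s/t + O(1)$ as $s\to\infty$ with $r$ fixed, hence $\binom{n_i}{a_i} \sim n_i^{a_i}/a_i! \sim (s/t)^{a_i}/a_i!$ and $\binom{r+s}{r} \sim s^r/r!$, where I use that each $a_i$ is bounded by $r$. Using $\sum_i a_i = r$, the ratio converges to $r!/(t^r \prod_i a_i!) = t^{-r}\binom{r}{a_0,\dots,a_{t-1}}$. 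This is precisely the classical degeneration of the multivariate hypergeometric distribution to the central multinomial distribution on $t$ symbols.

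Multiplying by $z^{|\rho|}$ and summing over $\Par^t_{r,rt}$ then yields the claimed formula for $\phi(z)$. As a sanity check, setting $z=1$ gives $t^{-r}\sum_{(a_0,\dots,a_{t-1})} \binom{r}{a_0,\dots,a_{t-1}} = 1$ by the multinomial theorem, confirming that the limit is a bona fide probability distribution. I do not anticipate any substantive obstacle: the only care needed is in verifying the asymptotic equivalence $\binom{n_i}{a_i}\sim (s/t)^{a_i}/a_i!$ uniformly in $i$, which is routine since the $n_i$'s differ only by bounded constants from $s/t$ while the $a_i$'s are bounded by $r$.
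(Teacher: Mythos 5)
Your proposal is correct and follows essentially the same route as the paper: the paper's proof simply asserts that the multivariate hypergeometric probabilities in \eqref{pmf-mult-hyper} tend to the central multinomial probabilities as $s\to\infty$ (the analogue of the hypergeometric-to-binomial degeneration), which is exactly the termwise limit you compute explicitly via the asymptotics $\binom{n_i}{a_i}\sim (s/t)^{a_i}/a_i!$ and $\binom{r+s}{r}\sim s^r/r!$. Your additional observation that $\Par^t_{r,s}$ stabilizes for $s\ge rt$ (via \cref{rem:larges}) is a useful justification, left implicit in the paper, for why the sum is over the fixed finite index set $\Par^t_{r,rt}$.
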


\begin{proof}
	It can be easily seen {from \cref{prop:bij_tuple_words}} that the probability of $\rho$ being the $t$-core of a uniformly random partition in $\Par_{r,s}$ in the limit as $s \to \infty$ is given by
	\[
	\frac{1}{t^r} \binom{r}{a_0,\dots,a_{t-1}},
	\]
	which describes the central multinomial distribution with parameters $r$ and $t$ and all probabilities equal to $1/t$. 
	This phenomenon is the analog of the hypergeometric distribution tending to the binomial distribution as the population size tends to infinity.
	From this, the formula for the probability generating function easily follows.
\end{proof}

We now consider the rectangular analog of the question we considered in \cite{ayyer-sinha-2023}. Recall that the Gamma distribution is a continuous distribution on the positive real line depending on a \emph{shape} parameter $\alpha > 0$ and a \emph{rate} parameter $\beta$ with density given by
\[
\frac{\beta^\alpha}{\Gamma(\alpha)} x^{\alpha - 1} e^{-\beta x}.
\]
Our main result is as follows.

\begin{theorem}
	\label{thm:gamma}
	As $r,s \to \infty$ so that $s/r \to \kappa$, the random variable given by $t |\core_t(\lambda)|/r$ for a uniformly random partition $\lambda$ in $\Par_{r,s}$ converges to a Gamma-distributed random variable with shape parameter $\alpha = (t-1)/2$ and rate parameter {$\beta = (1+\kappa)/(\kappa t)$}. 
\end{theorem}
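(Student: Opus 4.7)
The plan is to exploit the observation, made just after \cref{cor:partition_fixed_core}, that the random vector $(A_0,\dots,A_{t-1})$ encoding the $t$-core of a uniformly random $\lambda\in\Par_{r,s}$ is multivariate hypergeometric with parameters $(n_0,\dots,n_{t-1})$ and $r$. By combining \eqref{core-size} with \cref{rem:posjus}, $|\core_t(\lambda)|$ is an explicit quadratic-plus-linear polynomial in the $A_i$'s; I will show that after the scaling $t/r$ the linear piece is negligible and the quadratic piece converges, through a multivariate central limit theorem, to a scaled chi-squared random variable.

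For the main computation it is convenient to first assume $t\mid r$ and $t\mid s$. Then $n_i=(r+s)/t$ for all $i$ and $p_i=A_i-r/t$, so
\[
|\core_t(\lambda)|=\frac{t}{2}\sum_{i=0}^{t-1}\left(A_i-\frac{r}{t}\right)^{\!2}+\sum_{i=0}^{t-1}i\left(A_i-\frac{r}{t}\right).
\]
From \eqref{expvar}, the linear piece has mean zero and variance $O(r)$, hence after multiplication by $t/r$ vanishes in probability. It therefore suffices to study $\frac{t^{2}}{2r}\sum_{i}(A_{i}-r/t)^{2}$.

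The next step is to invoke the multivariate central limit theorem for the multivariate hypergeometric: in the regime $r,s\to\infty$ with $s/r\to\kappa$, the vector $(A_0-r/t,\dots,A_{t-1}-r/t)/\sqrt{r}$ converges in distribution to a centered Gaussian supported on the hyperplane $\sum_i x_i=0$, whose limiting covariance matrix has diagonal entries $\kappa(t-1)/(t^{2}(1+\kappa))$ and off-diagonal entries $-\kappa/(t^{2}(1+\kappa))$, both read off from \eqref{expvar}. A convenient realisation of this limit is $\sigma(Z_i-\bar Z)_i$ with $Z_0,\dots,Z_{t-1}$ independent standard normals, $\bar Z=\frac{1}{t}\sum_j Z_j$, and $\sigma^{2}=\kappa/(t(1+\kappa))$. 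One concrete way to establish this CLT is to view the multivariate hypergeometric as a central multinomial conditioned on its total being $r$, and then apply the classical multinomial CLT. By the continuous mapping theorem, $\frac{1}{r}\sum_i(A_i-r/t)^{2}$ converges in distribution to $\sigma^{2}\sum_i(Z_i-\bar Z)^{2}$, which is $\sigma^{2}$ times a chi-squared random variable with $t-1$ degrees of freedom. Consequently, $t|\core_t(\lambda)|/r$ converges in distribution to $\frac{t^{2}\sigma^{2}}{2}\chi^{2}_{t-1}=\frac{t\kappa}{2(1+\kappa)}\chi^{2}_{t-1}$, which is precisely the Gamma law with shape $(t-1)/2$ and rate $(1+\kappa)/(t\kappa)$.

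To remove the divisibility assumption, note that in general $n_i=(r+s)/t+O(1)$ and each position of justification $p_i$ differs from $A_i-r/t$ by an $O(1)$ integer shift; after division by $\sqrt{r}$ these perturbations are asymptotically negligible and the same Gaussian limit, hence the same Gamma distribution, persists. The main obstacle is the multivariate CLT for hypergeometric random vectors, whose covariance is singular because of the constraint $\sum_i A_i=r$; the conditioning trick from the multinomial CLT handles this cleanly, but some care is needed to identify the limiting covariance on the correct hyperplane and to verify that the linear correction terms arising when $t\nmid r$ or $t\nmid s$ do not contribute in the limit.
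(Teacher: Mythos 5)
Your proposal is correct and follows the same skeleton as the paper's proof: reduce to the case $t \mid r$, $t \mid s$, write $|\core_t(\lambda)|$ via \eqref{core-size} and \cref{rem:posjus} as a quadratic-plus-linear function of the multivariate hypergeometric vector $(A_0,\dots,A_{t-1})$, kill the linear term by a variance estimate, and pass to a singular Gaussian limit on the hyperplane $\sum_i x_i = 0$. Your covariance computations from \eqref{expvar} (diagonal $\kappa(t-1)/(t^2(1+\kappa))$, off-diagonal $-\kappa/(t^2(1+\kappa))$, hence $\sigma^2=\kappa/(t(1+\kappa))$) and the final arithmetic giving rate $(1+\kappa)/(\kappa t)$ all check out. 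Where you genuinely diverge is in the last step: the paper cites the hypergeometric CLT (\cref{thm:hypergom-conv}) and then computes the moment generating function of $\tfrac{t}{2}|Z|^2$ by an explicit Gaussian integral over the constrained density, obtaining $(1-ut\sigma^2)^{-(t-1)/2}$; you instead realise the limiting Gaussian as $\sigma(Z_i-\bar Z)_i$ for i.i.d.\ standard normals and invoke the classical fact that $\sum_i(Z_i-\bar Z)^2\sim\chi^2_{t-1}$, which identifies the Gamma law without any integral. That is arguably cleaner and makes the appearance of $t-1$ degrees of freedom transparent. You also handle the non-divisibility of $r,s$ by an $O(1)$-perturbation argument on the $n_i$ and the positions of justification rather than the paper's sandwiching appeal to the proof of \cref{thm:asymp-numcores}; since sandwiching a distributional limit is less immediate than sandwiching a count, your route is the more defensible one, though you would still need to spell out that the perturbed quadratic form differs from the unperturbed one by terms of order $O(\sqrt{r})\cdot O(1)/r \to 0$. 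Finally, your suggestion to derive the hypergeometric CLT by conditioning a central multinomial on its total is a legitimate way to supply what the paper simply cites from the literature, but it does require care (a local limit theorem for the conditioning event) if you want it to be self-contained.
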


\begin{center}
	\begin{figure}[h!]
		\hspace{4cm}
		\includegraphics[scale=0.7]{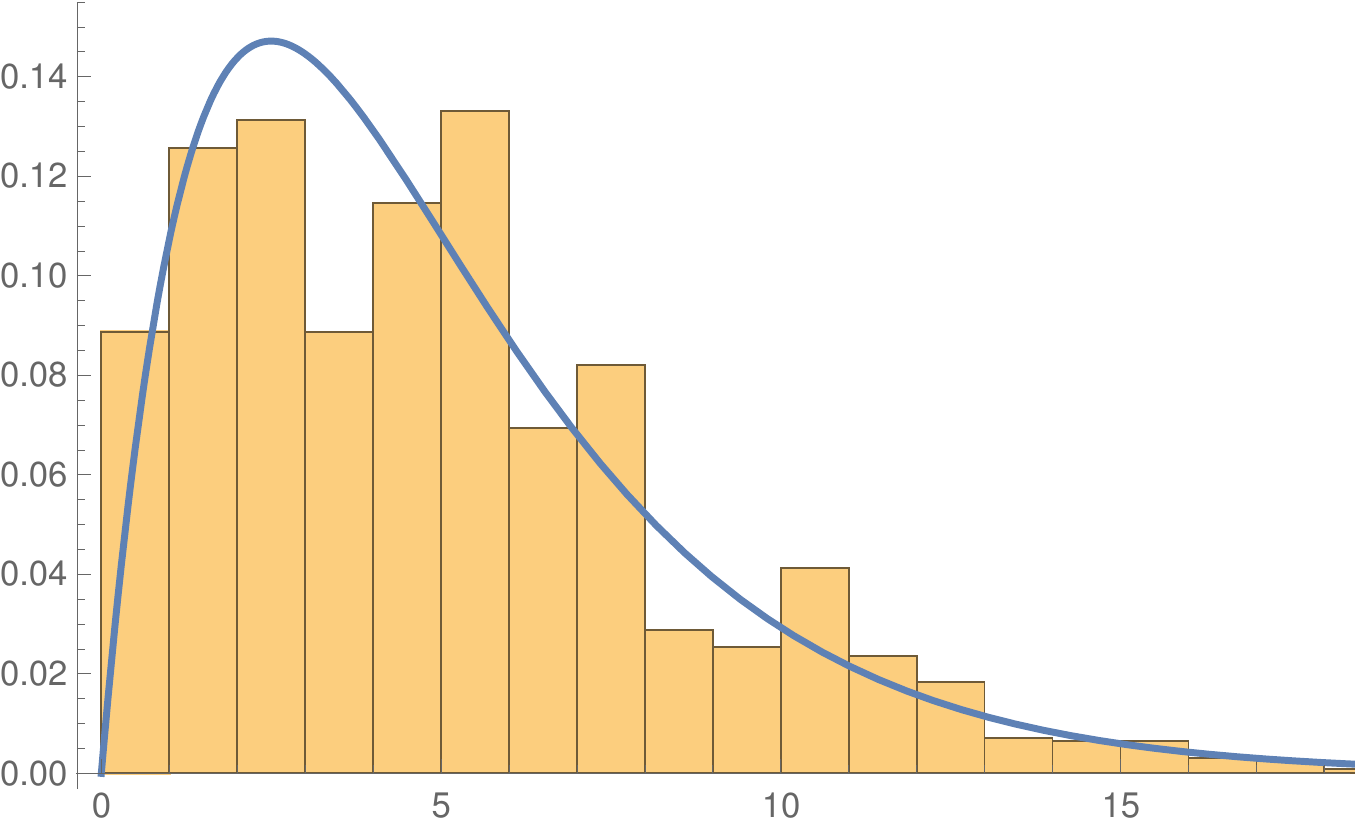}
		\caption{The histogram of $(5/12)$'th the size of the $5$-core for partitions inside $\Par_{12,12}$ and the density for the Gamma distribution with $\alpha = 2$ and $\beta = 2/5$. }
		\label{fig:5core-eg}
	\end{figure}
\end{center}

An illustration of this result is given in \cref{fig:5core-eg}.
To prove the above, we will need a standard convergence result for multivariate hypergeometric random variables. 

\begin{theorem}[{\cite[Chapter 39]{johnson-et-al-1997}}]
	\label{thm:hypergom-conv}
	Suppose $r+s$ is divisible by $t$.
	Let $(A_0,\dots,A_{t-1})$ have the multivariate hypergeometric distribution with parameters $(\underbrace{K,\dots, K}_t)$ and $r$, where $K = (r+s)/t$. Let $Z_i = (A_i - r/t)/ \sqrt{r/t}$ for $1 \leq i \leq t$. Then, as $r, s \to \infty$ with $s/r \to \kappa$, $(Z_0,\dots,Z_{t-1})$ converges to 
	a multivariate normal distribution with density
	\begin{equation}
	\label{dens-normal1}
	f(z_0,\dots,z_{t-1}) = \delta_{z_0 + \cdots + z_{t-1},0}
	\sqrt{\frac{t}{(2 \pi \sigma^2)^{t-1}}}
	\exp \left( - \frac{z_0^2 + \cdots + z_{t-1}^2}{2 \sigma^2} \right),
	\end{equation}
	where $\sigma^2 = \kappa/(\kappa+1)$ and $\delta$ is the Kronecker delta function.
\end{theorem}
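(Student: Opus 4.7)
The plan is to combine Corollary 3.4, the explicit formula (core-size) for $|\core_t(\lambda)|$ in terms of positions of justification, and the multivariate hypergeometric CLT of Theorem 4.4. By Corollary 3.4, if $\lambda\in \Par_{r,s}$ is uniform, the random vector $(A_0,\dots,A_{t-1})$ associated to $\core_t(\lambda)$ via the bijection of Proposition 2.1 has the $t$-dimensional multivariate hypergeometric distribution with parameters $(n_0,\dots,n_{t-1})$ and $r$. I first handle the case $r\equiv s\equiv 0\pmod t$, where $n_i=(r+s)/t$ for every $i$ and Theorem 4.4 applies verbatim; the non-divisible case is then absorbed by a perturbation argument, since the $n_i$ and the floor-shifts appearing in Remark 2.5 all differ from their ``divisible'' values by $O(1)$.

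In the divisible case, Remark 2.5 gives $p_i=A_i-r/t$, so by (core-size)
\[
|\core_t(\lambda)| \;=\; \frac{t}{2}\sum_{i=0}^{t-1}\left(A_i-\frac{r}{t}\right)^{\!2} \;+\; \sum_{i=0}^{t-1} i\left(A_i-\frac{r}{t}\right).
\]
Introducing $Z_i=(A_i-r/t)/\sqrt{r/t}$ as in Theorem 4.4 and multiplying by $t/r$ yields
\[
\frac{t\,|\core_t(\lambda)|}{r} \;=\; \frac{t}{2}\sum_{i=0}^{t-1} Z_i^{2} \;+\; \sqrt{\frac{t}{r}}\,\sum_{i=0}^{t-1} i\,Z_i .
\]
By Theorem 4.4, $(Z_0,\dots,Z_{t-1})\to Z^{\infty}$ jointly in distribution, so $\sum i Z_i$ is tight and, since $\sqrt{t/r}\to 0$, Slutsky's theorem eliminates the linear term. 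The continuous mapping theorem then gives $t\,|\core_t(\lambda)|/r \to (t/2)\,\|Z^{\infty}\|^{2}$ in distribution.

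It remains to identify the law of $(t/2)\|Z^{\infty}\|^{2}$ as the claimed Gamma. Reading off (dens-normal1), $Z^{\infty}$ is distributed as the orthogonal projection $\Pi W$ of an isotropic Gaussian $W\sim N(0,\sigma^{2}I_t)$ with $\sigma^{2}=\kappa/(\kappa+1)$ onto the $(t-1)$-dimensional hyperplane $H=\{\sum z_i=0\}$. Therefore $\|Z^{\infty}\|^{2}=\|\Pi W\|^{2}\sim \sigma^{2}\chi^{2}_{t-1}$ (expand in an orthonormal basis of $H$ to see this directly). Since $\chi^{2}_{t-1}$ is Gamma$((t-1)/2,\,1/2)$, scaling by $t\sigma^{2}/2$ gives Gamma with shape $(t-1)/2$ and rate $1/(t\sigma^{2})=(\kappa+1)/(\kappa t)=\beta$, as desired.

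Finally, for general $r,s$ write $r=kt+\epsilon$, $s=\ell t+\theta$. The quantities $n_i$ differ from $(r+s)/t$ by at most $1$; the shifts $\lfloor(r+t-1-i)/t\rfloor$ in Remark 2.5 differ from $r/t$ by at most $1$; and $\mathbb{E}[A_i]=r n_i/(r+s)$ differs from $r/t$ by $O(1)$. Expanding the square in (core-size) about $(A_i-r/t)^{2}$ and using that $A_i-r/t$ has fluctuation $O(\sqrt r)$ shows that these corrections contribute only $O(\sqrt r)$ to $|\core_t(\lambda)|$, hence vanish after division by $r$. Moreover, the multivariate CLT of Theorem 4.4 extends without change to parameters $(n_0,\dots,n_{t-1})$ with $n_i=(r+s)/t+O(1)$, since the covariance matrix is perturbed by only $O(1)$ and the means by the same order. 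The principal technical obstacle is precisely this bookkeeping; the rest of the argument is a clean application of Slutsky and continuous mapping, together with the standard projection-of-Gaussian computation.
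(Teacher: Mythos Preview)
Your proposal does not address the stated theorem. The statement you were asked to prove is the multivariate-hypergeometric central limit theorem (the result quoted from \cite{johnson-et-al-1997}), which asserts the joint convergence of $(Z_0,\dots,Z_{t-1})$ to the singular Gaussian with density \eqref{dens-normal1}. You have instead taken this result as a black box---you invoke it under the name ``Theorem 4.4''---and used it to prove a \emph{different} theorem, namely \cref{thm:gamma}, the Gamma limit for $t|\core_t(\lambda)|/r$. The paper does exactly the same thing: it states \cref{thm:hypergom-conv} without proof, as a citation, and then applies it in the proof of \cref{thm:gamma}. So as a proof of the theorem actually stated, your write-up is vacuous; nothing in it establishes the Gaussian limit for the hypergeometric vector.

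If, on the other hand, your write-up is read as a proof of \cref{thm:gamma}, it is correct and follows the paper's argument almost verbatim: the same rewriting of $|\core_t(\lambda)|$ via \eqref{core-size} and \cref{rem:posjus}, the same Slutsky reduction to $(t/2)\sum Z_i^2$, and the same sandwiching reduction to the case $t\mid r,s$. The one cosmetic difference is that the paper identifies the limit by computing the moment generating function of $(t/2)\|Z^\infty\|^2$ directly from the density \eqref{dens-normal1}, whereas you observe that $Z^\infty$ is the orthogonal projection of an isotropic $N(0,\sigma^2 I_t)$ onto the hyperplane $\{\sum z_i=0\}$, whence $\|Z^\infty\|^2\sim\sigma^2\chi^2_{t-1}$. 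These are two phrasings of the same computation.
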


We now use this to prove \cref{thm:gamma}.
The key tool in the proof is Stirling's formula approximating the factorial. 

\begin{proof}[Proof of \cref{thm:gamma}]
	By a sandwiching argument similar to the one used in the proof of \cref{thm:asymp-numcores}, it will suffice to consider sequences where {$r$ and $s$ are} divisible by $t$. {Using \eqref{core-size} and \eqref{posofjustification},
		\begin{equation}
		\begin{split}
		\frac{t}{r} |\core_t(\lambda)| 
		= &\frac{t}{r} \sum_{i=0}^{t-1}
		\left( \frac{t}{2} \frac{Z_i^2 r}{t} + i Z_i \sqrt{\frac{r}{t}} \right) \\
		= & \frac{t}{2} \sum_{i=0}^{t-1} Z_i^2 + \sqrt{\frac{t}{r}} \sum_{i=0}^{t-1} i Z_i,
		\end{split}
		\end{equation}
		where $Z_i=(a_i-r/t)/\sqrt{r/t}$ for $0\le i\le t-1$.
	}
	
	Now, as $r \to \infty$, the second term vanishes. Therefore, we end up with something that looks like a chi-squared random variable with $t$ degrees of freedom, except for the fact that the $Z_i$'s sum to $0$. 
	Applying \cref{thm:hypergom-conv}, one can consider the density in \eqref{dens-normal1} as a function of the variables $z_1, \dots, z_{t-1}$ with $z_0 = - (z_1 + \dots + z_{t-1})$. After some manipulations, one finds that the new density $\hat{f}$ can be written as that of a multivariate normal, i.e.
	\begin{equation}
	\hat{f}(z_1,\dots,z_{t-1}) = \frac{1}{\sqrt{(2\pi)^{t-1} \det \Sigma}}
	\exp \left( - \frac{1}{2} \underline{z}^T \Sigma^{-1} \underline{z} \right),
	\end{equation}
	where $\underline{z}^T = (z_1, \dots, z_{t-1})$. Here $\Sigma$ is the covariance matrix whose inverse is given by
	\[
	\Sigma^{-1} = \frac{1}{\sigma^2} \left( I_{t-1} + J_{t-1} \right),
	\]
	where $I_n$ and $J_n$ are the identity matrix and matrix of all ones of size $n$ respectively.
	One can easily show that $\Sigma$ is $\sigma^2/t$ times a matrix whose diagonal entries are all equal to $t-1$ and off-diagonal entries are all equal to $-1$.
	
	To determine the limiting distribution of the size of the $t$-core, we now calculate the moment generating function of $t/2 |Z|^2$. But this is easy to do since
	\begin{equation*}
	\begin{split}
	\mathbb{E}(e^{u t |Z|^2/2}) = 
	\sqrt{\frac{t}{(2 \pi \sigma^2)^{t-1}}}
	& \int dz_0 \cdots \int dz_{t-1} \; 
	\delta_{z_0 + \cdots + z_{t-1}, 0} \\
	& \times \exp \left(\left(  \frac{ut}{2} - \frac{1}{2 \sigma^2} \right) |z|^2 \right),
	\end{split}
	\end{equation*}
	which only modifies the scaling factor. A short computation shows that
	\[
	\mathbb{E}(e^{u t |Z|^2/2}) = (1 - u t \sigma^2)^{-(t-1)/2},
	\]
	which is precisely the moment generating function of the desired gamma distribution.
\end{proof}

Using the well-known fact that the expectation of a Gamma$(\alpha, \beta)$ random variable is $\alpha/\beta$, we get a good sanity check for \cref{thm:gamma} by comparing with the leading term for \cref{prop:exp}.

\begin{corollary}
	\label{cor:exp}
	In the limit as $r, s \to \infty$ with $s/r \to \kappa$,  
	\[
	\mathbb{E} \left( \frac tr |\core_t(\lambda)| \right) = 
	\binom{t}{2} \frac{\kappa}{1+\kappa}.
	\]
\end{corollary}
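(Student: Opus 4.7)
The statement is really a limit computation that can be done from either side: one can either appeal to \cref{thm:gamma} together with the fact that the mean of $\mathrm{Gamma}(\alpha,\beta)$ is $\alpha/\beta$ (and verify the arithmetic $(t-1)/2 \cdot \kappa t/(1+\kappa) = \binom{t}{2}\kappa/(1+\kappa)$), or one can take the leading-order asymptotics of the closed form in \cref{prop:exp}. Since convergence in distribution does not automatically yield convergence of first moments, the cleanest route is the latter: verify the corollary by a direct asymptotic expansion of the formula from \cref{prop:exp}, which then also serves as an independent check of \cref{thm:gamma}.

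The plan is as follows. I would start from the identity
\[
\mathbb{E}(|\core_t(\lambda)|) = \frac{rs}{(r+s)(r+s-1)}\Bigl(\tbinom{r+s \bmod t}{2}+\bigl\lfloor\tfrac{r+s}{t}\bigr\rfloor\tbinom{t}{2}\Bigr)
\]
from \cref{prop:exp}, multiply through by $t/r$, and examine each factor as $r,s\to\infty$ with $s/r\to\kappa$.

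For the rational prefactor, write
\[
\frac{t}{r}\cdot\frac{rs}{(r+s)(r+s-1)} = \frac{t(s/r)}{(1+s/r)(1+s/r-1/r)}\cdot\frac{1}{r+s-1},
\]
so multiplying by $r+s$ we see $\frac{t(r+s)s}{r(r+s)(r+s-1)}\to \frac{t\kappa}{1+\kappa}\cdot\frac{1}{r+s-1}\cdot(r+s)$, i.e.\ the natural pairing is
\[
\frac{t}{r}\cdot\frac{rs}{(r+s)(r+s-1)}\cdot\bigl\lfloor\tfrac{r+s}{t}\bigr\rfloor \;\longrightarrow\; \frac{t\kappa}{1+\kappa}\cdot\frac{1}{t} = \frac{\kappa}{1+\kappa},
\]
since $\lfloor(r+s)/t\rfloor/(r+s-1)\to 1/t$. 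Multiplying by $\binom{t}{2}$ gives the claimed limit.

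The only remaining point is to verify that the contribution from the $\binom{r+s\bmod t}{2}$ summand is negligible. This is immediate: $r+s\bmod t$ is bounded, so $\binom{r+s\bmod t}{2}=O(1)$, while $\tfrac{t}{r}\cdot \tfrac{rs}{(r+s)(r+s-1)}=O(1/r)\to 0$. Combining the two contributions yields
\[
\lim_{\substack{r,s\to\infty\\ s/r\to\kappa}}\mathbb{E}\!\left(\tfrac{t}{r}|\core_t(\lambda)|\right) = \binom{t}{2}\,\frac{\kappa}{1+\kappa},
\]
which matches $\alpha/\beta$ for the limiting Gamma distribution of \cref{thm:gamma}. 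There is no real obstacle here beyond carefully separating the bounded and linearly growing pieces of the bracketed expression; the mild subtlety is that appealing to \cref{thm:gamma} alone would require a uniform integrability argument, which the direct asymptotic computation sidesteps.
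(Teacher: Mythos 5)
Your proof is correct, and it is essentially the route the paper intends: the paper states the corollary as a one-line consequence of the Gamma mean $\alpha/\beta$ from \cref{thm:gamma}, cross-checked against ``the leading term for \cref{prop:exp}'', and your computation is precisely that leading-term extraction carried out in full. Your observation that convergence in distribution does not by itself give convergence of first moments is a fair point in favour of the direct route via \cref{prop:exp}, which the paper glosses over. One small blemish: your intermediate display
$\frac{t}{r}\cdot\frac{rs}{(r+s)(r+s-1)} = \frac{t(s/r)}{(1+s/r)(1+s/r-1/r)}\cdot\frac{1}{r+s-1}$
is not an identity (the right side carries an extra factor of order $1/(r+s)$), and the subsequent line ``$\to \frac{t\kappa}{1+\kappa}\cdot\frac{1}{r+s-1}\cdot(r+s)$'' is not a well-formed limit statement; but the pairing you actually use, namely
$\frac{ts}{r+s}\cdot\frac{\lfloor (r+s)/t\rfloor}{r+s-1} \to \frac{t\kappa}{1+\kappa}\cdot\frac{1}{t} = \frac{\kappa}{1+\kappa}$,
is correct, as is the argument that the bounded term $\binom{r+s \bmod t}{2}$ contributes $O(1/r)$. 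So the conclusion $\binom{t}{2}\frac{\kappa}{1+\kappa}$ stands; just clean up that intermediate manipulation.
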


\begin{question}
	A natural question would be to generalize our results in \cref{prop:prob-gen-fn} and \cref{thm:gamma} when the weight of a partition $\lambda$ is $q^{|\lambda|}$.  This would correspond to $q$-analogs of the multinomial and multivariate hypergeometric distributions, which have not been studied as far as we know.
\end{question}

	\subsection*{Acknowledgements}
{We thank the referees for an exceptionally careful reading of the manuscript and for many useful comments.	
	The first author (AA) acknowledges support from SERB Core grant CRG/ 2021/001592 as well as the DST FIST program - 2021 [TPN - 700661].}

	\bibliography{../cores.bib}

\begin{thebibliography}{GKS90}

\bibitem[AS20]{ayyer-sinha-2020}
Arvind Ayyer and Shubham Sinha.
\newblock Random {$t$}-cores and hook lengths in random partitions.
\newblock {\em S\'{e}m. Lothar. Combin.}, 84B:Art. 58, 11, 2020.

\bibitem[AS23]{ayyer-sinha-2023}
Arvind Ayyer and Shubham Sinha.
\newblock The size of {$t$}-cores and hook lengths of random cells in random
  partitions.
\newblock {\em Ann. Appl. Probab.}, 33(1):85--106, 2023.

\bibitem[GKS90]{garvan-kim-stanton-1990}
Frank Garvan, Dongsu Kim, and Dennis Stanton.
\newblock Cranks and {$t$}-cores.
\newblock {\em Invent. Math.}, 101(1):1--17, 1990.

\bibitem[GO96]{granville-ono-1996}
Andrew Granville and Ken Ono.
\newblock Defect zero {$p$}-blocks for finite simple groups.
\newblock {\em Trans. Amer. Math. Soc.}, 348(1):331--347, 1996.

\bibitem[God45]{goddard-1945}
L.~S. Goddard.
\newblock The accumulation of chance effects and the {G}aussian frequency
  distribution.
\newblock {\em Philos. Mag. (7)}, 36:428--433, 1945.

\bibitem[JK09]{james-kerber-2009}
Gordon~Douglas James and Adalbert Kerber.
\newblock {\em The Representation Theory of the Symmetric Group}, volume~16 of
  {\em Encyclopedia of Mathematics and its Applications}.
\newblock Cambridge University Press, 2009.

\bibitem[JKB97]{johnson-et-al-1997}
Norman~L. Johnson, Samuel Kotz, and N.~Balakrishnan.
\newblock {\em Discrete multivariate distributions}.
\newblock Wiley Series in Probability and Statistics: Applied Probability and
  Statistics. John Wiley \& Sons, Inc., New York, 1997.
\newblock A Wiley-Interscience Publication.

\bibitem[Loe11]{loehr-2011}
Nicholas~A. Loehr.
\newblock {\em Bijective combinatorics}.
\newblock Discrete Mathematics and its Applications (Boca Raton). CRC Press,
  Boca Raton, FL, 2011.

\bibitem[LP99]{lulov-pittel-1999}
Nathan Lulov and Boris Pittel.
\newblock On the random {Y}oung diagrams and their cores.
\newblock {\em J. Combin. Theory Ser. A}, 86(2):245--280, 1999.

\bibitem[MN89]{mccullagh-nelder-1989}
P.~McCullagh and J.~A. Nelder.
\newblock {\em Generalized linear models}.
\newblock Monographs on Statistics and Applied Probability. Chapman \& Hall,
  London, 1989.
\newblock Second edition [of MR0727836].

\bibitem[Ols93]{olsson}
J{\o}rn~B. Olsson.
\newblock {\em Combinatorics and representations of finite groups}, volume~20
  of {\em Vorlesungen aus dem Fachbereich Mathematik der Universit\"at GH
  Essen}.
\newblock Universit\"at Essen, 1993.

\bibitem[Ros21]{rostam-2021}
Salim Rostam.
\newblock Core size of a random partition for the plancherel measure, 2021.
\newblock \texttt{arXiv} eprint
  \href{https://arxiv.org/abs/2111.05970}{2111.05970}.

\bibitem[Sta97]{ec1}
Richard~P. Stanley.
\newblock {\em Enumerative combinatorics. {V}ol. 1}, volume~49 of {\em
  Cambridge Studies in Advanced Mathematics}.
\newblock Cambridge University Press, Cambridge, 1997.
\newblock With a foreword by Gian-Carlo Rota, Corrected reprint of the 1986
  original.

\bibitem[Swa15]{swanepoel-2015}
Jan W.~H. Swanepoel.
\newblock On a generalization of a theorem by {E}uler.
\newblock {\em J. Number Theory}, 149:46--56, 2015.

\bibitem[Zho20]{Zhong-2020}
Hao Zhong.
\newblock Bijections between {$t$}-core partitions and {$t$}-tuples.
\newblock {\em Discrete Math.}, 343(6):111866, 18, 2020.

\end{thebibliography}
	\bibliographystyle{alpha}

\end{document}